\providecommand{\U}[1]{\protect\rule{.1in}{.1in}}
\newtheorem{thm}{Theorem}[section]
\newtheorem{lem}[thm]{Lemma}
\newtheorem{corol}[thm]{Corollary}
\theoremstyle{definition}
\newcommand{\forma}[1]{\langle #1 \rangle}
\newcommand{\norma}[1]{\Vert #1 \Vert}
\email{fidelisb@gmail.com}
\email{danielfbustosrios@gmail.com}
\email{esidney@gmail.com}
\email{fuspedro@gmail.com}
\email{jaime.ripoll@ufrgs.br}
\begin{document}
\title[ ]{MINIMAL ISOPARAMETRIC SUBMANIFOLDS OF $\mathbb{S}^{7}$ AND OCTONIONIC EIGENMAPS}
\author[ ]{Fidelis Bittencourt }
\address{Departamento de matem\'atica, Universidade Federal de Santa Maria, Santa
Maria, Brazil}
\author[ ]{Daniel Bustos R.}
\address{Instituto de matem\'atica e estat\'istica, Universidade Federal do Rio Grande
do Sul, Porto Alegre, Brazil}
\author[ ]{Edson S. Figueiredo}
\address{Departamento de matem\'atica, Universidade Federal de Santa Maria, Santa
Maria, Brazil}
\author[ ]{Pedro Fusieger}
\address{Departamento de matem\'atica, Universidade Federal de Santa Maria, Santa
Maria, Brazil}
\author[ ]{Jaime B. Ripoll}
\address{Instituto de matem\'atica e estat\'istica, Universidade Federal do Rio Grande
do Sul, Porto Alegre, Brazil}

\begin{abstract}
{\tiny We use the octonionic multiplication $\cdot$ of $\mathbb{S}^{7}$ to associate,
to each unit normal section $\eta$ of a submanifold $M$ of $\mathbb{S}^{7},$
an octonionic Gauss map $\gamma_{\eta}:M\rightarrow\mathbb{S}^{6},$
$\gamma_{\eta}(x)=x^{-1}\cdot\eta(x),$ $x\in M,$ where
$\mathbb{S}^{6}$ is the unit sphere of $T_{1}\mathbb{S}^{7},$
$1$ is the neutral element of $\cdot$ in $\mathbb{S}^{7}.$
Denoting by $\mathcal{N}(M)$ the vector bundle of normal sections of $M$ we
set, for $\eta$ $\in\mathcal{N}(M),$ $S_{\eta}(X)=\textcolor{red}{-}\left(
\nabla_{X}\eta\right)  ^{\top},$ $X\in TM.$ The
Hilbert-Schmidt inner product $\langle S_{\eta},S_{\nu
}\rangle$ on the vector bundle
\[
\mathcal{S}(M)=\left\{  S_{\eta}\text{
$\vert$
}\eta\in\mathcal{N}(M)\right\}
\]
is the trace of the bilinear form
\[
(X,Y)\in TM\times TM\mapsto\langle S_{\eta}(X),S_{\nu}(Y)\rangle\in
\mathbb{R}.
\]
Defining the bundle map $\mathcal{B}:\mathcal{N}(M)\rightarrow\mathcal{S}(M)$
by $\mathcal{B}(\eta)=S_{\eta},$ we prove that if $M$ is a minimal submanifold
of $\mathbb{S}^{7}$ and $\eta\in\mathcal{N}(M)$ is unitary and  parallel on
the normal connection, then $\gamma_{\eta}$ is harmonic if and only if $\eta$
is an eigenve\textcolor{red}{c}tor of
\[
\mathcal{B}^{\ast}\mathcal{B}:\mathcal{N}(M)\rightarrow\mathcal{N}%
(M)\textcolor{red}{,}
\]
that is, there is $\lambda\in C^{\infty}\left(  M\right)  $ such that
$\mathcal{B}^{\ast}\mathcal{B}\left(  \eta\right)  =\lambda\eta,$ where
$\mathcal{B}^{\ast}$ is the adjoint of $\mathcal{B}.$ If $M$ is an
isoparametric compact minimal submanifold of codimension $k$ of $\mathbb{S}%
^{7}$ then $\mathcal{B}^{\ast}\mathcal{B}$ has constant non negative
eigenvalues $0\leq\sigma_{1}\leq\cdots\leq\sigma_{k}$ and the associated
eigenvectors $\eta_{1},\cdots,\eta_{k}$ form an orthonormal basis of
$\mathcal{N}(M)$, parallel on the normal connection, such that each
$\gamma_{\eta_{j}}$ is an eigenmap of $M$ with eigenvalue $7-k+$ $\sigma
_{j},$ that is, $\Delta\gamma_{\eta_{j}}=-\left(
7-k+\sigma_{j}\right)  \gamma_{\eta_{j}}.$ Moreover, $\sigma_{j}=\Vert
S_{\eta_{j}}\Vert^{2},$ $1\leq j\leq k.$ It follows that each function
$\left\langle \gamma_{n_{j}},e\right\rangle $ is an eigenfunction for the
Laplacian of $M$ with eigenvalue $7-k+\sigma_{j},$ $1\leq j\leq k,$ for any
given $e\in T_{1}\mathbb{S}^{7}.$ Considering $\mathbb{S}^{m}$ as a totally
geodesic submanifold of $\mathbb{S}^{7}$, $3\leq m\leq7,$ if $M^{m-1}$ is a
minimal hypersurface of $\mathbb{S}^{m}$ and $\eta$ is an unit normal vector
field to $M$ in $\mathbb{S}^{m}$ then
$\gamma_{\eta}$ is a harmonic map. If $M$ is a compact, minimal submanifold of
$\mathbb{S}^{7}$ and $\eta$ an unit normal eigenvector of 
$\mathcal{B}^{\ast}\mathcal{B}$ then the Gauss image $\gamma_{\eta}(M)$
is not contained in an open hemisphere of $\mathbb{S}^{6}.$ }

\end{abstract}
\maketitle

\allowdisplaybreaks

\section{Introduction}
There is a vast literature extending and studying, under several point of views, the Gauss map of surfaces of the Euclidean space to submanifolds of arbitrary dimension and codimension and to more general ambient spaces. This study comprises notably  minimal and parallel mean curvature vector submanifolds (some well known and representative references, which are a fraction of what have already been done are: \cite{BN}, \cite{CL}, \cite{HO}, \cite{I}, \cite{O}, \cite{Os}). 

In \cite{BFLR} the authors use the octonionic product $\cdot$ of
$\mathbb{S}^{7}$ to define a Gauss map $\gamma_{\eta}:M\rightarrow
\mathbb{S}^{6}\subset T_{1}\mathbb{S}^{7}$of an orientable hypersurface $M$ of
$\mathbb{S}^{7}$ by
\begin{equation}
\gamma_{\eta}(x)=x^{-1}\cdot\eta(x),x\in M, \label{g}%
\end{equation}
where $\eta$ is an unit normal vector field of $M,$ $1$ is the neutral element
of $\cdot,$ $\mathbb{S}^{6}$ is the unit sphere of $T_{1}\mathbb{S}^{7}.$ They
prove that $M$ has constant mean curvature if and only if $\gamma_{\eta}$ is
harmonic and use this characterization of a CMC hypersurface of $\mathbb{S}%
^{7}$ to describe the geometry and topology of $M$ under conditions on the
Gauss image $\gamma_{\eta}\left(  M\right)  .$ We use here the octonionic
structure of $\mathbb{S}^{7}$ to study the Gauss map determined by unit normal
sections of minimal submanifolds of arbitrary codimension of $\mathbb{S}^{7}$.
Our main application consist in presenting explicit eigenmaps of minimal
isoparametric submanifolds of $\mathbb{S}^{7}.$ To state our main results we
have to introduce some notations and definitions.

Let $M$ be a submanifold of $\mathbb{S}^{7}.$ We denote by $\mathcal{N}\left(
M\right)  $ the vector bundle over $M$ of the normal sections of the normal
bundle
\[
\left\{  \left(  x,\eta\right)  \in T\mathbb{S}^{7}\text{
$\vert$
}\eta\in\left(  T_{x}M\right)  ^{\bot}\right\}
\]
of $M.$ Given $\eta\in\mathcal{N}\left(  M\right)  $ we set $S_{\eta
}(X)=-\left(  \nabla_{X}\eta\right)  ^{\top},$ where $\nabla$ is the
Riemannian connection on $\mathbb{S}^{7},$ $X$ a tangent vector field to $M$
and $\top$ the orthogonal projection on $TM.$
We denote by $\mathcal{S}\left(  M\right)  $ the vector bundle over $M$ of non
normalized second fundamental forms of $M,$ namely:%
\[
\mathcal{S}\left(  M\right)  =\left\{  S_{\eta}\text{
$\vert$
}\eta\in\mathcal{N}\left(  M\right)  \right\}  .
\]
The Hilbert-Schmidt inner product $\langle S_{\eta},S_{\nu
}\rangle$ on $\mathcal{S}\left(  M\right)  $ is defined as the trace of the
bilinear form
\[
(X,Y)\in TM\times TM\mapsto\langle S_{\eta}(X),S_{\nu}(Y)\rangle\in
\mathbb{R}.
\]
Define a bundle map $\mathcal{B}:\mathcal{N}\left(  M\right)  \rightarrow
\mathcal{S}\left(  M\right)  $ by $\mathcal{B}\left(  \eta\right)  =S_{\eta}$
and by $\mathcal{B}^{\ast}:\mathcal{S}\left(  M\right)  \rightarrow
\mathcal{N}\left(  M\right)  $ the adjoint of $\mathcal{B}$.

We may see that a smooth map $\gamma:M\rightarrow\mathbb{S}^{6}\subset
\mathbb{R}^{7}$ is harmonic, that is, a critical point of the
functional
\[
g\mapsto\int_{M}\left\Vert Dg\right\Vert ^{2},
\]
$g:M\rightarrow\mathbb{S}^{6}$ smooth with compact support, if and only if
\begin{equation}
\Delta\gamma=\lambda\gamma\label{ei}%
\end{equation}
for some function $\lambda$ on $M$, where
\[
\Delta\gamma=\sum_{i=1}^{7}\left(  \Delta_{M}\langle\gamma,e_{i}%
\rangle\right)  e_{i},
\]
$\Delta_{M}$ is the usual Laplacian on $M$ and $\left\{  e_{i}\right\}  $ any
fixed orthonormal basis of $\mathbb{R}^{7}.$ Using the octonionic structure of
$\mathbb{S}^{7}$ to define $\gamma_{\eta}:M\rightarrow\mathbb{S}^{6}$ by
(\ref{g}) we prove:

\begin{thm}
\label{mink} Let $M$ be a minimal submanifold of codimension
$1\leq k\leq6$ of $\mathbb{S}^{7}$ and let $\eta\in\mathcal{N}\left(
M\right)  $ be an unit normal section, parallel on the normal
connection of $M\textcolor{red}{,}$ that is, $\left(  \nabla_{X}\eta\right)
^{\bot}=0$ for all $X\in TM.$ Then the following alternatives are equivalent:

\qquad(i) $\gamma_{\eta}:M\rightarrow\mathbb{S}^{6}\subset T_{1}\mathbb{S}%
^{7}$ satisfies%
\[
\Delta\gamma_{\eta}=-\left(  7-k+\left\Vert S_{\eta
}\right\Vert ^{2}\right)  \gamma_{\eta}\textcolor{red}{.}
\]

\bigskip

\qquad(ii) $\eta$ is an eigenvector of $\mathcal{B}^{\ast}\mathcal{B}$ with
eigenvalue $\left\Vert S_{\eta}\right\Vert ^{2}\textcolor{red}{.}$

\bigskip

\qquad(iii) $\gamma_{\eta}:M\rightarrow\mathbb{S}^{6}$ is harmonic.
\end{thm}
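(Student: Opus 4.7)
The strategy is to compute $\Delta\gamma_{\eta}$ in closed form and then read off the three equivalences. Recall from equation (\ref{ei}) that a smooth map into $\mathbb{S}^{6}\subset\mathbb{R}^{7}$ is harmonic if and only if its Euclidean Laplacian is pointwise parallel to the map itself. Consequently all three statements in the theorem amount to controlling the decomposition of $\Delta\gamma_{\eta}$ into its component along $\gamma_{\eta}=x^{-1}\cdot\eta$ and its component transversal to $\gamma_{\eta}$.

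Fix $x\in M$ and an orthonormal frame $\{e_1,\dots,e_m\}$, $m=7-k$, on $M$ geodesic at $x$. Writing $\widetilde{\nabla}$ for the flat derivative of $\mathbb{R}^{8}=\mathbb{O}$ and using the Leibniz rule (valid because octonionic multiplication is $\mathbb{R}$-bilinear), a first differentiation of $\gamma_\eta$ together with $\widetilde{\nabla}_X\eta=\nabla^{\mathbb{S}^{7}}_X\eta=-S_\eta(X)$ (where normal-parallelism is used) gives
\[
\widetilde{\nabla}_X\gamma_\eta \;=\; \bar X\cdot\eta \;-\; \bar x\cdot S_\eta(X).
\]
A second differentiation, using the double Gauss formula $\widetilde{\nabla}_{e_i}e_i|_x=\mathrm{II}(e_i,e_i)-x$ for $M\subset\mathbb{S}^{7}\subset\mathbb{R}^{8}$ and summing over $i$, yields a formula for $\Delta\gamma_\eta=\sum_i\widetilde{\nabla}_{e_i}\widetilde{\nabla}_{e_i}\gamma_\eta$ with six summands.

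Three collapses then simplify the formula: (1) minimality of $M$ forces $\sum_i\mathrm{II}(e_i,e_i)=0$ and $\mathrm{tr}(S_\eta)=0$; (2) constant curvature of $\mathbb{S}^{7}$ together with $\nabla^{\perp}\eta=0$ makes $S_\eta$ a Codazzi tensor, hence $\sum_i\nabla^M_{e_i}(S_\eta e_i)=\mathrm{grad}(\mathrm{tr}\,S_\eta)=0$; and (3) the composition-algebra polarization identity $\bar u\cdot v+\bar v\cdot u=2\langle u,v\rangle\cdot 1$ combined with symmetry of $S_\eta$ gives
\[
\sum_i \bar{e}_i\cdot S_\eta(e_i) \;=\; \mathrm{tr}(S_\eta)\cdot 1 \;=\; 0.
\]
What remains is the clean identity
\[
\Delta\gamma_\eta \;=\; -(7-k)\,\gamma_\eta \;-\; \bar{x}\cdot\mathcal{B}^{\ast}\mathcal{B}(\eta).
\]

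Since $x\in\mathbb{S}^{7}$ is a unit octonion, left multiplication by $\bar x$ is invertible on $\mathbb{O}$, so $\Delta\gamma_\eta$ is parallel to $\gamma_\eta=\bar x\cdot\eta$ if and only if $\mathcal{B}^{\ast}\mathcal{B}(\eta)$ is parallel to $\eta$; pairing with $\eta$ pins down the scalar as $\langle\mathcal{B}^{\ast}\mathcal{B}(\eta),\eta\rangle=\Vert\mathcal{B}(\eta)\Vert^{2}=\Vert S_\eta\Vert^{2}$, and the three equivalences (i)$\Leftrightarrow$(ii)$\Leftrightarrow$(iii) drop out. The main obstacle is step (3): non-associativity forbids any reordering of octonionic products, so the identity $\sum_i\bar{e}_i\cdot S_\eta(e_i)=\mathrm{tr}(S_\eta)$ must be derived carefully—either via the polarization identity above or, equivalently, by diagonalizing $S_\eta$ in an orthonormal eigenbasis $\{f_i\}$ with eigenvalues $\lambda_i$ and using $\bar f_i\cdot f_i=|f_i|^{2}\cdot 1=1$ so that $\sum_i\bar f_i\cdot S_\eta(f_i)=\sum_i\lambda_i$; the independence of this sum from the basis choice is routine.
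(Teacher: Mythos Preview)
Your argument is correct and reaches exactly the same key identity as the paper, namely
\[
\Delta\gamma_{\eta}\;=\;-(7-k)\,\gamma_{\eta}\;-\;\bar{x}\cdot\mathcal{B}^{\ast}\mathcal{B}(\eta),
\]
(which in an orthonormal normal frame $\eta_{1}=\eta,\dots,\eta_{k}$ is the paper's formula $\Delta\gamma_{\eta}=-\sum_{j}\bigl(\langle S_{\eta},S_{\eta_{j}}\rangle+(7-k)\delta_{1j}\bigr)\gamma_{\eta_{j}}$), but the route is genuinely different. The paper works coordinate by coordinate: for each $v\in T_{1}\mathbb{S}^{7}$ it exploits that $V(x)=x\cdot v$ is a \emph{Killing} field of $\mathbb{S}^{7}$, so that the cross terms $\sum_{i}\langle\nabla_{E_{i}}\eta,\nabla_{E_{i}}V\rangle$ vanish by skew-symmetry of $\nabla V$, and then uses the curvature tensor of $\mathbb{S}^{7}$ to identify $\sum_{i}\langle\eta,\nabla_{E_{i}}\nabla_{E_{i}}V\rangle=-nf$; the remaining term is handled by decomposing $V$ into tangent and normal parts. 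You instead differentiate $\gamma_{\eta}=\bar{x}\cdot\eta$ directly with the Leibniz rule for the bilinear octonionic product and the ambient flat connection of $\mathbb{R}^{8}$, and the simplifications come from Gauss/Codazzi for $M\subset\mathbb{S}^{7}\subset\mathbb{R}^{8}$ together with the composition-algebra identity $\bar u\cdot v+\bar v\cdot u=2\langle u,v\rangle\,1$. Your approach is arguably more transparent---the operator $\mathcal{B}^{\ast}\mathcal{B}$ appears intrinsically as $\sum_{i}\mathrm{II}(e_{i},S_{\eta}e_{i})$ rather than via a basis expansion---and it avoids the Killing-field machinery entirely; on the other hand the paper packages the computation into a reusable lemma (Lemma~\ref{laplacian}) phrased in terms of the spherical connection, which may adapt more readily to settings where one has Killing fields but no ambient flat product structure.
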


A straightforward consequence of the  Theorem  \ref{mink} is:

\begin{corol}
\label{min1} Consider $\mathbb{S}^{m}$ as a totally geodesic submanifold of
$\mathbb{S}^{7},$ $3\leq m\leq7.$ Let $M^{m-1}$ be an oriented minimal
hypersurface of $\mathbb{S}^{m}$, and let $\eta$ be an unit normal vector
field to $M$ in $\mathbb{S}^{m}$. Then,
$\gamma_{\eta}$ is a harmonic map.
\end{corol}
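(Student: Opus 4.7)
The plan is to derive Corollary 1.2 directly from Theorem 1.1 by viewing the minimal hypersurface $M^{m-1}\subset\mathbb{S}^m$ as a codimension $k=8-m$ submanifold of $\mathbb{S}^7$ and checking that the hypotheses of the theorem are fulfilled with the same $\eta$.

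First I would verify that $M$ is minimal in $\mathbb{S}^7$. Since $\mathbb{S}^m$ is totally geodesic in $\mathbb{S}^7$, its second fundamental form vanishes, so the second fundamental form of $M$ in $\mathbb{S}^7$ (as a tensor on $TM\times TM$) coincides, via the usual decomposition of the normal bundle, with the second fundamental form of $M$ in $\mathbb{S}^m$. Hence the mean curvature vector is preserved and $M$ is minimal in $\mathbb{S}^7$. Also, the range $3\leq m\leq 7$ gives $1\leq k\leq 5\leq 6$, so the codimension hypothesis of Theorem \ref{mink} is satisfied.

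Next I would check that $\eta$ qualifies as a unit normal section of $M$ in $\mathbb{S}^7$ which is parallel on the normal connection. Unitarity and normality in $\mathbb{S}^7$ are immediate: $\eta$ is tangent to $\mathbb{S}^m$ and orthogonal to $M$, hence orthogonal to $M$ inside $\mathbb{S}^7$ as well. For the parallelism, the normal bundle $\mathcal{N}(M)$ of $M$ in $\mathbb{S}^7$ splits fiberwise as the line spanned by $\eta$ plus the normal bundle of $\mathbb{S}^m$ in $\mathbb{S}^7$. Given $X\in TM$, the total geodesy of $\mathbb{S}^m$ forces $\nabla_X\eta\in T\mathbb{S}^m$, so its component in the normal bundle of $\mathbb{S}^m$ vanishes; its component along $\eta$ vanishes because $|\eta|=1$ implies $\langle\nabla_X\eta,\eta\rangle=0$. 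Therefore $(\nabla_X\eta)^\perp=0$.

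With minimality, unit length, and normal parallelism established for $\eta\in\mathcal{N}(M)$, Theorem \ref{mink} (iii) applies and yields that $\gamma_\eta:M\to\mathbb{S}^6$ is harmonic. I do not expect any serious obstacle here; the only subtlety is the careful identification of the normal bundle of $M$ in $\mathbb{S}^7$ with the direct sum of the normal bundle of $M$ in $\mathbb{S}^m$ and the (parallel, in $\mathbb{S}^7$) normal bundle of $\mathbb{S}^m$, which makes the parallelism of $\eta$ in the larger ambient a transparent consequence of the totally geodesic inclusion.
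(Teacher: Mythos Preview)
Your setup is correct (minimality of $M$ in $\mathbb{S}^{7}$, unit length, and normal parallelism of $\eta$), but the final step misreads Theorem~\ref{mink}. That theorem does \emph{not} assert that under the stated hypotheses $\gamma_{\eta}$ is automatically harmonic; it asserts that conditions (i), (ii), (iii) are \emph{equivalent}. Verifying the hypotheses only puts you in a position to use the equivalence; you must still check one of (i) or (ii) in order to conclude (iii). As written, your argument stops exactly one step short.

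The missing step is to verify (ii), namely that $\eta$ is an eigenvector of $\mathcal{B}^{\ast}\mathcal{B}$. This is where the total geodesy of $\mathbb{S}^{m}\subset\mathbb{S}^{7}$ enters a second time. Complete $\eta$ to an orthonormal normal frame $\{\eta_{1}=\eta,\eta_{2},\dots,\eta_{k}\}$ of $M$ in $\mathbb{S}^{7}$ with $\eta_{2},\dots,\eta_{k}$ lying in the normal bundle of $\mathbb{S}^{m}$ in $\mathbb{S}^{7}$. For $j\geq 2$ and $X\in TM\subset T\mathbb{S}^{m}$, total geodesy of $\mathbb{S}^{m}$ gives $(\nabla_{X}\eta_{j})^{T\mathbb{S}^{m}}=0$, hence $S_{\eta_{j}}(X)=-(\nabla_{X}\eta_{j})^{\top}=0$. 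Thus $S_{\eta_{j}}=0$ for all $j\geq 2$, so $\langle S_{\eta},S_{\eta_{j}}\rangle=0$ for $j\neq 1$, i.e.\ $\langle \mathcal{B}^{\ast}\mathcal{B}(\eta),\eta_{j}\rangle=0$, and therefore $\mathcal{B}^{\ast}\mathcal{B}(\eta)=\Vert S_{\eta}\Vert^{2}\,\eta$. Now (ii) holds, and the equivalence in Theorem~\ref{mink} yields (iii). This is presumably what the paper means by ``straightforward consequence.''
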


Recall that a submanifold $M$ of $\mathbb{S}^{n}$ is called isoparametric if
it has flat normal bundle (zero normal curvature) and the principal curvatures
along any parallel normal field are constant. It is known that any
isoparametric submanifold of $\mathbb{S}^{n}$ is a leaf of a foliation
(singular) of $\mathbb{S}^{n}$ by isoparametric submanifolds and that this
foliations contains a leaf which is regular and minimal (see \cite{T} and
\cite{PT}, Section 6). We also recall that $\gamma:M\rightarrow\mathbb{S}^{6}$
is an eigenmap if it is harmonic and the function $\lambda$ in (\ref{ei}) is
constant (see also \cite{QT}, \cite{EL}). We prove

\begin{thm}
\label{isoparametric} If $M$ is an isoparametric compact minimal submanifold
of codimension $1\leq k\leq 6$ of $\mathbb{S}^{7}$ then $\mathcal{B}^{\ast}\mathcal{B}$
has constant non negative eigenvalues $0\leq\sigma_{1}\leq\cdots\leq\sigma
_{k}$ and the associated eigenvectors $\eta_{1},\cdots,\eta_{k}$ form an
orthonormal basis of $\mathcal{N}(M)$, parallel on the normal connection, such
that each $\gamma_{\eta_{j}}$ is an eigenmap of $M$ with eigenvalue $7-k+$
$\sigma_{j}\textcolor{red}{,}$ that is, $\Delta\gamma_{\eta_{j}}=-\left(
7-k+\sigma_{j}\right)  \gamma_{\eta_{j}}.$ Moreover, $\sigma_{j}=\Vert
S_{\eta_{j}}\Vert^{2},$ $1\leq j\leq k.$ It follows that each function
$\left\langle \gamma_{n_{j}},e\right\rangle $ is an eigenfunction for the
Laplacian of $M$ with eigenvalue $7-k+\sigma_{j},$ $1\leq j\leq k,$ for any
given $e\in T_{1}\mathbb{S}^{7}.$
\end{thm}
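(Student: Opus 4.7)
The plan is to reduce the theorem to a finite dimensional constant-coefficient linear algebra problem at a single point and then invoke Theorem~\ref{mink}. Since $M$ is compact isoparametric, its normal bundle is flat and, moreover, the normal holonomy is trivial, so I can fix a globally defined orthonormal frame $\xi_{1},\ldots,\xi_{k}$ of $\mathcal{N}(M)$ consisting of sections that are parallel with respect to the normal connection (see \cite{PT}, \cite{T}).

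The self-adjoint bundle map $\mathcal{B}^{\ast}\mathcal{B}:\mathcal{N}(M)\to\mathcal{N}(M)$ is represented in this frame by the matrix of functions
\[
A_{ij}=\langle \mathcal{B}^{\ast}\mathcal{B}(\xi_{i}),\xi_{j}\rangle = \langle \mathcal{B}(\xi_{i}),\mathcal{B}(\xi_{j})\rangle = \langle S_{\xi_{i}},S_{\xi_{j}}\rangle = \mathrm{tr}(S_{\xi_{i}}S_{\xi_{j}}).
\]
To see that every $A_{ij}$ is a constant function on $M$, I apply the isoparametric hypothesis to the parallel normal field $\xi=a\xi_{i}+b\xi_{j}$, with $a,b\in\mathbb{R}$ arbitrary constants: the eigenvalues of $S_{\xi}=aS_{\xi_{i}}+bS_{\xi_{j}}$ are constant, hence so is
\[
\mathrm{tr}(S_{\xi}^{2})=a^{2}\,\mathrm{tr}(S_{\xi_{i}}^{2})+2ab\,\mathrm{tr}(S_{\xi_{i}}S_{\xi_{j}})+b^{2}\,\mathrm{tr}(S_{\xi_{j}}^{2}),
\]
and varying $(a,b)$ forces each coefficient to be constant.

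Because $(A_{ij})$ is a constant real symmetric positive semidefinite matrix, there exist constants $0\leq\sigma_{1}\leq\cdots\leq\sigma_{k}$ and a constant orthogonal matrix $(c_{il})$ with $\sum_{j}A_{ij}c_{jl}=\sigma_{l}c_{il}$. Setting $\eta_{l}=\sum_{i}c_{il}\xi_{i}$ produces an orthonormal frame of $\mathcal{N}(M)$ still parallel on the normal connection (a constant linear combination of parallel sections remains parallel), and $\mathcal{B}^{\ast}\mathcal{B}(\eta_{l})=\sigma_{l}\eta_{l}$. The identity $\sigma_{l}=\Vert S_{\eta_{l}}\Vert^{2}$ is automatic, since
\[
\sigma_{l}=\langle \mathcal{B}^{\ast}\mathcal{B}(\eta_{l}),\eta_{l}\rangle=\Vert \mathcal{B}(\eta_{l})\Vert^{2}=\Vert S_{\eta_{l}}\Vert^{2}.
\]

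Each $\eta_{l}$ now satisfies the hypotheses of Theorem~\ref{mink} (unit, parallel on the normal connection, and an eigenvector of $\mathcal{B}^{\ast}\mathcal{B}$), so
\[
\Delta\gamma_{\eta_{l}}=-(7-k+\Vert S_{\eta_{l}}\Vert^{2})\gamma_{\eta_{l}}=-(7-k+\sigma_{l})\gamma_{\eta_{l}},
\]
and since $7-k+\sigma_{l}$ is a real constant, $\gamma_{\eta_{l}}$ is an eigenmap. Writing $\gamma_{\eta_{l}}=\sum_{i}\langle\gamma_{\eta_{l}},e_{i}\rangle e_{i}$ in any fixed orthonormal basis $\{e_{i}\}$ of $T_{1}\mathbb{S}^{7}\simeq\mathbb{R}^{7}$ and reading off components yields the final claim about Laplace eigenfunctions. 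The one point requiring care is securing a globally parallel orthonormal normal frame on a possibly non-simply-connected compact isoparametric $M$; this is a structural feature of isoparametric submanifolds rather than a direct consequence of their definition, and is the only non-elementary ingredient used.
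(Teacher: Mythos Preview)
Your proof is correct and follows essentially the same strategy as the paper: obtain a global parallel orthonormal normal frame, use the isoparametric hypothesis to see that $\mathcal{B}^{\ast}\mathcal{B}$ has constant matrix in that frame, diagonalize with constant coefficients, and invoke Theorem~\ref{mink}. The only cosmetic differences are that the paper produces its parallel frame by passing to the ambient $\mathbb{R}^{8}$ and citing Terng, and that your polarization argument for the constancy of $A_{ij}=\mathrm{tr}(S_{\xi_{i}}S_{\xi_{j}})$ is more explicit than the paper's one-line appeal to the definition.
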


The image of the Gauss map of a minimal surface in the Euclidean space is a
classical topic of study in Differential Geometry and there is a vast
literature on this subject. It is well known that if the image of the Gauss
map of a complete minimal surface of $\mathbb{R}^{3}$ is contained in a
hemisphere of $\mathbb{S}^{2}$ then the surface is a plane. By defining a
Gauss map of an orientable minimal hypersurface $M^{n}$ of the sphere
$\mathbb{S}^{n+1}$ with an unit normal vector field $\eta$ as the usual
Euclidean Gauss map $\gamma:M\rightarrow\mathbb{S}^{n+1},$ $\gamma
(x)=\eta(x),$ $x\in M,$ E. De Giorgi \cite{DG} and, independently J. Simons
\cite{JS} proved that if $M$ is compact and $\gamma(M)$ lies in an open
hemisphere of $\mathbb{S}^{n+1}$ then $M$ must be a great hypersphere in
$\mathbb{S}^{n+1}.$ As a consequence of Theorems \ref{mink} and \ref{min1}, we
obtain here a similar result for minimal submanifolds of arbitrary codimension
of $\mathbb{S}^{7}$:

\begin{thm}
\label{image}
 Let $M$ be a compact and minimal submanifold of codimension $1\leq k \leq 5$ of the sphere $\mathbb{S}^7.$ Let $\eta$ be an unit normal vector parallel in the normal bundle of $M$ that is  an eigenvector of
$\mathcal{B}^{\ast}\mathcal{B}$. Then the image
of the octonionic Gauss map $\gamma_{\eta}$ is not contained in an open
hemisphere of $\mathbb{S}^{6}.$
\end{thm}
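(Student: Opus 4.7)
The plan is to adapt the classical De Giorgi--Simons hemisphere argument, reducing to a positive solution of an elliptic eigenvalue equation with strictly positive coefficient, and then applying the divergence theorem on a closed manifold.

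First, I would apply Theorem \ref{mink} directly to the hypotheses. Since $\eta$ is a unit normal section, parallel in the normal connection, and (by assumption) an eigenvector of $\mathcal{B}^{\ast}\mathcal{B}$, condition (ii) of Theorem \ref{mink} is satisfied. Through the equivalence (ii) $\Longleftrightarrow$ (i) I obtain the pointwise identity
\[
\Delta\gamma_{\eta} \;=\; -\bigl(7-k+\Vert S_{\eta}\Vert^{2}\bigr)\,\gamma_{\eta} \qquad \text{on } M,
\]
where $\Delta$ acts componentwise with respect to any fixed orthonormal basis of $T_{1}\mathbb{S}^{7}\cong\mathbb{R}^{7}$.

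Next, argue by contradiction. Suppose $\gamma_{\eta}(M)$ lies in some open hemisphere of $\mathbb{S}^{6}$. Then there exists a unit vector $a\in T_{1}\mathbb{S}^{7}$ such that the smooth function $f:=\langle\gamma_{\eta},a\rangle$ is strictly positive on $M$. Taking the inner product of the Laplacian identity with $a$ yields
\[
\Delta_{M} f \;=\; -\mu\, f, \qquad \mu(x) := 7-k+\Vert S_{\eta}(x)\Vert^{2}.
\]
Because $1\leq k\leq 5$, we have $\mu(x)\geq 7-k\geq 2>0$ everywhere on $M$, so $\mu f$ is strictly positive.

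Finally, since $M$ is compact and without boundary, the divergence theorem gives $\int_{M}\Delta_{M} f\,dV = 0$, whereas the right-hand side integrates to $-\int_{M}\mu f\,dV <0$. This contradiction forces $\gamma_{\eta}(M)$ not to be contained in any open hemisphere of $\mathbb{S}^{6}$.

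There is essentially no obstacle beyond having Theorem \ref{mink} in hand: the argument boils down to the standard observation that a positive eigenfunction of $-\Delta_{M}$ with strictly positive eigenvalue cannot exist on a closed Riemannian manifold. The codimension restriction $k\leq 5$ enters only to guarantee the strict positivity $7-k>0$, which is precisely what produces the sign contradiction upon integration; this clarifies why the hypothesis is imposed in the statement rather than the weaker bound $k\leq 6$.
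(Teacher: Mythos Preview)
Your argument is correct and follows essentially the same route as the paper: apply Theorem~\ref{mink}(i) to obtain $\Delta_M\langle\gamma_\eta,a\rangle=-\mu\langle\gamma_\eta,a\rangle$ with $\mu\geq 7-k>0$, then exploit compactness to rule out a strictly positive solution. The paper's version is slightly more elaborate---it perturbs the pole $v$ to seven nearby linearly independent directions and uses ``superharmonic on a closed manifold implies constant'' rather than integrating directly---but your single-vector divergence-theorem argument achieves the same contradiction more economically.
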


\section{The octonionic structure of $\mathbb{S}^{7}$ and the octonionic Gauss
map.}

The octonions is a $8-$dimensional Cayley-Dickson algebra $\mathcal{C}_{8}.$
Given a number $n\in\{0,1,2,\dots\},$ the Cayley-Dickson algebra
$\mathcal{C}_{n}$ is a division algebra structure on $\mathbb{R}^{2^{n}}$
defined inductively by $\mathcal{C}_{0}=\mathbb{R}$ and by the following
formulae: If $x=\left(  x_{1},x_{2}\right)  $, $y=\left(  y_{1},y_{2}\right)
$ are in $\mathbb{R}^{2^{n}}=\mathbb{R}^{2^{n-1}}\times\mathbb{R}^{2^{n-1}}$,
$n\geq1$, then
\begin{equation}
x\cdot y=\left(  x_{1}y_{1}-\overline{y_{2}}x_{2},y_{2}x_{1}+x_{2}%
\overline{y_{1}}\right)  , \label{xy}%
\end{equation}
where
\[
\overline{x}=\left(  \overline{x}_{1},-x_{2}\right)  ,
\]
with $\overline{x}=x$ if $x\in\mathbb{R}$ (see \cite{B}).

We use the notation $\mathbb{O}=\mathcal{C}_{8}$ for the octonions and denote
by $1$ the neutral element of $\mathbb{O}.$ We mention below some well known
facts about the octonions which proofs can be found in \cite{B}. Besides being
a division algebra, $\mathbb{O}$ is normed: $\Vert x\cdot y \Vert=\Vert x
\Vert\Vert y \Vert,$ for any $x,\ y\in\mathbb{O}$, where $\Vert\ \Vert$ is the
usual norm of $\mathbb{R}^{8},$ and $\Vert x \Vert=\sqrt{x\cdot\overline{x}}.$
Setting $\operatorname*{Re}(x)=\left(  x+\overline{x}\right)  /2$ we have
\[
T_{1}\mathbb{S}^{7}=\{x\in\mathbb{R}^{8}\ | \ \operatorname*{Re}(x)=0\}.
\]

The right and left translations $R_{x}, L_{x}:\mathbb{O}\rightarrow
\mathbb{O},$ $R_{x}(v)=v\cdot x,$ $L_{x}(v)=x\cdot v,$ $v\in\mathbb{O},$ are
orthogonal maps if $\Vert x \Vert=1$ and are skew-symmetric if
$\operatorname*{Re}(x)=0.$ In particular, the unit sphere $\mathbb{S}^{7}$ is
preserved by left and right translation of unit vectors and, moreover, any
$v\in T_{1}\mathbb{S}^{7}$ determines a Killing vector field $V$ of
$\mathbb{S}^{7}$ given by the left translation, $V(x)=x\cdot v$,
$x\in\mathbb{S}^{7}.$

Define
\begin{align*}
\Gamma: T\mathbb{S}^{7}  &  \to T_{1}\mathbb{S}^{7}\\
(x,v)  &  \mapsto L_{x^{-1}}(v).
\end{align*}
We shall also use the notation $\Gamma_{x} (v)=\Gamma(x,v).$ If $M$ is a
submanifold of $\mathbb{S}^{7},$ a global unit normal section $\eta$ of $M$
determines a octonionic Gauss map
\[
\gamma_{\eta}:M\rightarrow\mathbb{S}^{6} \subset T_{1}\mathbb{S}^{7}%
\]
by setting
\[
\gamma_{\eta}(x)=\Gamma_{x}(\eta(x))=x^{-1}\cdot\eta(x), \ \ x\in\mathbb{S}^{7},
\]
where $\mathbb{S}^{6}$ is the unit sphere of $T_{1}\mathbb{S}^{7}.$

\section{Proof of the results}

The following lemma is a basic but fundamental result of the paper:

\begin{lem}
\label{laplacian} Let $M$ be a $n$-dimensional minimal submanifold of
$\mathbb{S}^{7}$ and let $\eta$ be an unit normal vector field
and  parallel in the
normal connection of $M.$ Then, for $v\in\mathbb{S}^{6}\subset T_{1}%
\mathbb{S}^{7},$ setting
\[
f(x)=\langle\gamma_{\eta}(x),v \rangle,\ x\in M,
\]
we have
\begin{align}
\Delta f(x)=-\sum_{k=1}^{7-n}\left(  \langle S_{\eta},S_{\eta_{k}} \rangle
_{x}+n\delta_{1k}\right)  \langle\Gamma_{x}(\eta_{k}),v \rangle,
\end{align}
where $\{\eta_{1}=\eta,\cdots, \eta_{7-n} \}$ is any orthonormal frame in a
neighborhood of $x$ in the normal bundle of $M.$
\end{lem}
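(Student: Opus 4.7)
The plan is to regard $\gamma_{\eta}(x)=\overline{x}\cdot\eta(x)$ (using $x^{-1}=\overline{x}$ on $\mathbb{S}^{7}$) as an $\mathbb{R}^{8}$-valued map on $M$ and compute $\Delta\gamma_{\eta}$ componentwise, exploiting the fact that octonionic multiplication is $\mathbb{R}$-bilinear. Fix $x_{0}\in M$ and an orthonormal frame $\{e_{k}\}_{k=1}^{n}$ on $M$ geodesic at $x_{0}$, so that $\Delta h|_{x_{0}}=\sum_{k}e_{k}(e_{k}h)|_{x_{0}}$ componentwise for any smooth $h:M\to\mathbb{R}^{8}$. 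Writing $X:M\to\mathbb{R}^{8}$ for the position vector, the product rule for the Laplacian of a bilinear pairing yields, at $x_{0}$,
\[
\Delta\gamma_{\eta}=(\Delta\overline{X})\cdot\eta+2\sum_{k=1}^{n}\overline{e_{k}}\cdot(D_{e_{k}}\eta)+\overline{X}\cdot\Delta\eta,
\]
so the lemma reduces to computing three pieces and pairing with $v$.

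For the first piece, $\Delta X$ is the mean curvature vector of $M\hookrightarrow\mathbb{R}^{8}$; by the Gauss formula for $\mathbb{S}^{7}\subset\mathbb{R}^{8}$ and minimality of $M$ in $\mathbb{S}^{7}$, this equals $-nx_{0}$, so $(\Delta\overline{X})\cdot\eta=-n\gamma_{\eta}(x_{0})=-n\Gamma_{x_{0}}(\eta_{1})$, which produces the $n\delta_{1k}$ contribution after pairing with $v$. For the third piece, $\eta$ normal-parallel gives $D_{e_{k}}\eta=-S_{\eta}(e_{k})$; differentiating once more and splitting the $\mathbb{R}^{8}$-derivative into its $M$-tangential, $\mathbb{S}^{7}$-normal, and $\mathbb{S}^{7}\subset\mathbb{R}^{8}$ parts gives
\[
D_{e_{k}}(S_{\eta}E_{k})|_{x_{0}}=(\nabla^{M}_{e_{k}}S_{\eta})(e_{k})+\alpha(e_{k},S_{\eta}(e_{k}))-\langle e_{k},S_{\eta}(e_{k})\rangle x_{0},
\]
with $\alpha$ the second fundamental form of $M$ in $\mathbb{S}^{7}$. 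Summing over $k$, the last piece vanishes since $\operatorname{tr}(S_{\eta})=n\langle H,\eta\rangle=0$ by minimality; Codazzi in constant-curvature ambient with $\eta$ normal-parallel gives $(\nabla^{M}_{X}S_{\eta})Y=(\nabla^{M}_{Y}S_{\eta})X$, and combined with $\operatorname{tr}(S_{\eta})=0$ this forces $\sum_{k}(\nabla^{M}_{e_{k}}S_{\eta})(e_{k})=0$. Hence $\Delta\eta=-\sum_{k}\alpha(e_{k},S_{\eta}(e_{k}))$; expanding in the orthonormal frame $\{\eta_{j}\}_{j=1}^{7-n}$ via the Weingarten relation $\langle\alpha(X,Y),\eta_{j}\rangle=\langle S_{\eta_{j}}X,Y\rangle$ yields $\sum_{k}\alpha(e_{k},S_{\eta}(e_{k}))=\sum_{j}\langle S_{\eta},S_{\eta_{j}}\rangle\,\eta_{j}$, and pairing $\overline{x_{0}}\cdot(\cdot)$ with $v$ recovers the $\langle S_{\eta},S_{\eta_{j}}\rangle\langle\Gamma_{x_{0}}(\eta_{j}),v\rangle$ contributions.

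The main obstacle, and the essential use of the octonionic algebra, is the cross term $-2\sum_{k}\langle\overline{e_{k}}\cdot S_{\eta}(e_{k}),v\rangle$. By the normed-algebra identity $\langle a\cdot b,c\rangle=\langle b,\overline{a}\cdot c\rangle$ this equals $-2\sum_{k}\langle S_{\eta}(e_{k}),e_{k}\cdot v\rangle=-2\sum_{k}\langle S_{\eta}(e_{k}),R_{v}(e_{k})\rangle$, i.e.\ $-2$ times the trace over $T_{x_{0}}M$ of $R_{v}\circ S_{\eta}$. Extending $S_{\eta}$ to $\mathbb{R}^{8}$ by zero on $(T_{x_{0}}M)^{\perp_{\mathbb{R}^{8}}}$ yields a symmetric operator on $\mathbb{R}^{8}$ whose composition with $R_{v}$ has trace equal to this quantity (the extra basis vectors contribute zero since $S_{\eta}$ vanishes there). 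Here is the decisive point: since $v\in T_{1}\mathbb{S}^{7}$ is pure imaginary, $R_{v}$ is skew-symmetric on $\mathbb{R}^{8}$ (a fact recalled in the paper), and the trace of the product of a symmetric and a skew-symmetric operator is zero. Thus the cross term vanishes, and assembling the three contributions produces the stated identity. Without the hypothesis $v\in T_{1}\mathbb{S}^{7}$ the argument would break precisely here, which is why the lemma is genuinely octonionic rather than a mere artifact of a bilinear product.
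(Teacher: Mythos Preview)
Your proof is correct and takes a genuinely different route from the paper's. The paper rewrites $f(x)=\langle\eta(x),V(x)\rangle$ with $V(x)=x\cdot v$ a Killing field on $\mathbb{S}^{7}$, and computes $\Delta f$ intrinsically on the sphere: the cross term $2\sum_{i}\langle\nabla_{E_{i}}\eta,\nabla_{E_{i}}V\rangle$ vanishes because the geodesic frame is chosen to diagonalize $S_{\eta}$ and $\langle\nabla_{E_{i}}V,E_{i}\rangle=0$ by the Killing equation; the contribution $-nf$ is extracted from the curvature tensor of $\mathbb{S}^{7}$ applied to $V$; and the vanishing of the tangential part of $\sum_{i}\nabla_{E_{i}}\nabla_{E_{i}}\eta$ is obtained by a direct curvature-swap computation rather than by invoking Codazzi. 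You instead work extrinsically in $\mathbb{R}^{8}$ via the Leibniz rule for the bilinear map $(x,y)\mapsto\overline{x}\cdot y$, get the $-n$ from $\Delta X=-nx_{0}$, handle the divergence term with Codazzi plus $\operatorname{tr}S_{\eta}=0$, and kill the cross term by the symmetric--times--skew trace argument. The two approaches are equivalent at the core---$V$ being Killing on $\mathbb{S}^{7}$ is precisely the skew-symmetry of $R_{v}$ on $\mathbb{R}^{8}$---but your packaging is more self-contained and makes the role of the hypothesis $v\in T_{1}\mathbb{S}^{7}$ explicit, while the paper's Killing-field formulation never leaves $\mathbb{S}^{7}$ and avoids the (harmless) extension of $S_{\eta}$ to all of $\mathbb{R}^{8}$.
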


\begin{proof}
Let $x\in M$ be given. Let $\{E_{1},\cdots, E_{n}\}$ be an orthonormal frame
in a neighbourhood of $x,$ geodesic at $x\in M$ and that diagonalizes the
second fundamental form $S_{\eta}$ at $x\in M.$ Then,
\begin{align}
\label{1}\Delta\langle\Gamma(\eta),v \rangle=\sum_{i=1}^{n} E_{i}E_{i}%
\langle\Gamma(\eta),v \rangle=\sum_{i=1}^{n} E_{i}E_{i}\langle\eta,V \rangle,
\end{align}
where $V(x)=x\cdot v.$ Since $V$ is a Killing vector field of $\mathbb{S}^{7}$
it follows that, at $x,$
\begin{align*}
\Delta f  &  =\sum_{i=1}^{n}\left(  \langle\nabla_{E_{i}}\nabla_{E_{i}}\eta,V
\rangle+2\langle\nabla_{E_{i}}\eta,\nabla_{E_{i}}V \rangle+\langle\eta
,\nabla_{E_{i}}\nabla_{E_{i}}V \rangle\right) \\
&  = \sum_{i=1}^{n}\left(  \langle\nabla_{E_{i}}\nabla_{E_{i}}\eta,V
\rangle+\langle\eta,\nabla_{E_{i}}\nabla_{E_{i}}V \rangle\right)  .
\end{align*}

We claim that
\[
\sum_{i=1}^{n}\langle\eta,\nabla_{E_{i}}\nabla_{E_{i}}V \rangle=-nf.
\]
Indeed,
\begin{align*}
\sum_{i=1}^{n}\langle\eta, \nabla_{E_{i}}\nabla_{E_{i}}V \rangle &  =
\sum_{i=1}^{n}\left(  E_{i}\langle\eta,\nabla_{E_{i}}V \rangle-\langle
\nabla_{E_{i}}\eta,\nabla_{E_{i}}V \rangle\right) \\
&  = -\sum_{i=1}^{n} E_{i}\langle E_{i},\nabla_{\eta}V \rangle\\
&  = -\sum_{i=1}^{n} \langle\nabla_{E_{i}}E_{i},\nabla_{\eta}V \rangle
-\sum_{i=1}^{n} \langle E_{i},\nabla_{E_{i}}\nabla_{\eta}V \rangle\\
&  = -\sum_{i=1}^{n} \langle E_{i},\nabla_{E_{i}}\nabla_{\eta}V \rangle.
\end{align*}
That is,
\begin{align}
\label{eq2}\sum_{i=1}^{n}\langle\eta, \nabla_{E_{i}}\nabla_{E_{i}}V \rangle &
= -\sum_{i=1}^{n} \langle E_{i},\nabla_{E_{i}}\nabla_{\eta}V \rangle.
\end{align}
Extending each $E_{i}$ to a neighborhood of $x$ in $\mathbb{S}^{7}$ such that
the extension is parallel along the geodesic given by $\eta$ we obtain
\begin{align}
\label{eq3}0=\eta\langle\nabla_{E_{i}}V,E_{i} \rangle=\langle\nabla_{\eta
}\nabla_{E_{i}}V,E_{i} \rangle.
\end{align}

By (\ref{eq2}) and (\ref{eq3}), and using the curvature tensor of
$\mathbb{S}^{7},$
\begin{align*}
nf  &  =\sum_{i=1}^{n}\left(  \langle\nabla_{E_{i}}\nabla_{\eta}V,E_{i}%
\rangle-\langle\nabla_{\eta}\nabla_{E_{i}}V,E_{i}\rangle+\langle
\nabla_{\lbrack\eta,E_{i}]}V,E_{i}\rangle\right) \\
&  =-\sum_{i=1}^{n}\left(  \langle\nabla_{E_{i}}\nabla_{E_{i}}V,\eta
\rangle+\langle\nabla_{E_{i}}V,[\eta,E_{i}]\rangle\right) \\
&  =-\sum_{i=1}^{n}\langle\nabla_{E_{i}}\nabla_{E_{i}}V,\eta\rangle.
\end{align*}


Therefore, substituting in (\ref{1}) we have, at $x,$
\begin{align*}
\Delta f  &  = -nf+\sum_{i=1}^{n}\langle\nabla_{E_{i}}\nabla_{E_{i}}\eta,V
\rangle.
\end{align*}
Let $\{\eta_{1}=\eta,\ \cdots,\ \eta_{7-n} \}$ be an orthonormal frame in a
neighborhood of $x$ in the normal bundle of $M.$ Writing $V$ in terms of the
tangent and normal frames,
\[
V=\sum_{i=1}^{n} v_{i} E_{i}+\sum_{k=1}^{7-n}f_{k}\eta_{k},
\]
we have
\begin{align*}
\sum_{i=1}^{n}\langle\nabla_{E_{i}}\nabla_{E_{i}}\eta,V \rangle &
=\sum_{i,j=1}^{n}v_{j}\langle\nabla_{E_{i}}\nabla_{E_{i}}\eta,E_{j}
\rangle+\sum_{k=1}^{7-n}\sum_{i=1}^{n}\langle\nabla_{E_{i}}\nabla_{E_{i}}%
\eta,\eta_{k} \rangle f_{k}\\
&  =\sum_{i,j=1}^{n}v_{j}\langle\nabla_{E_{i}}\nabla_{E_{i}}\eta,E_{j}
\rangle-\sum_{k=1}^{7-n}\langle S_{\eta},S_{\eta_{k}} \rangle f_{k}.
\end{align*}

We now show that
\[
\displaystyle{\sum_{i=1}^{n}\langle\nabla_{E_{i}}\nabla_{E_{i}}\eta,E_{j}
\rangle}=0,
\]
which proves the lemma. First, we note that, at $x,$
\[
[E_{i},E_{j}]=[E_{i},E_{j}]^{\top}=(\nabla_{E_{i}}E_{j}-\nabla_{E_{j}}%
E_{i})^{\top}=0,
\]
where $\top$ denotes the orthogonal projection on $TM$. Then, using the
curvature tensor of $\mathbb{S}^{7}$ and the equality $\langle[E_{j}%
,E_{i}],\eta\rangle=0$ along $M,$
\begin{align*}
\langle\nabla_{E_{j}}\nabla_{E_{i}}E_{i},\eta\rangle &  = \langle\nabla
_{E_{i}}\nabla_{E_{j}}E_{i},\eta\rangle\\
&  =\langle\nabla_{E_{i}}[E_{j},E_{i}],\eta\rangle-\langle\nabla_{E_{i}}%
\nabla_{E_{i}}E_{j},\eta\rangle\\
&  =-\langle\nabla_{E_{i}}\nabla_{E_{i}}E_{j},\eta\rangle.
\end{align*}

Therefore,
\begin{align*}
\sum_{i=1}^{n}\langle\nabla_{E_{i}}\nabla_{E_{i}}\eta,E_{j} \rangle &
=-\sum_{i=1}^{n}\langle\nabla_{E_{i}}\nabla_{E_{i}}E_{j},\eta\rangle\\
&  = \sum_{i=1}^{n}\langle\nabla_{E_{j}}\nabla_{E_{i}}E_{i},\eta\rangle\\
&  = \sum_{i=1}^{n} \left(  E_{j}\langle\nabla_{E_{i}}E_{i},\eta
\rangle-\langle\nabla_{E_{i}}E_{i},\nabla_{E_{j}}\eta\rangle\right) \\
&  = 0
\end{align*}
concluding with the proof of the lemma.
\end{proof}

\begin{proof}
[Proof of the Theorem \ref{mink}]%
We prove simultaneously the equivalences among
(i), (ii) and (iii). Let $\{v_{1},...,v_{7}\}$ be an orthonormal basis of the
tangent space $T_{1}\mathbb{S}^{7}.$ Fix $x\in M$ and let $\eta$ be a unit normal section parallel in the normal connection of $M.$ Consider an orthonormal frame $\{\eta_1=\eta,\ \eta_2,\ \cdots,\ \eta_k \}$ on a neighborhood of the normal bundle of $M$ at $x.$ Setting $$\gamma_{\eta_j}(x):=\Gamma_x(\eta_j(x))=x^{-1}\cdot \eta_j(x),$$  $j=1,\ \cdots,\ k,$ we have from the Lemma \ref{laplacian}
\begin{align*}
\Delta\gamma_{\eta}(x)  &  =\sum_{i=1}^{7}(\Delta\langle\gamma_{\eta},v_{i} \rangle)v_{i}\\
&  = -\sum_{i=1}^{7}\left(  \sum_{j=1}^{k}\left(  \langle S_{\eta},S_{\eta_{j}} \rangle+(7-k)\delta_{1j}\right)  \langle\gamma_{\eta_j},v_{i}
\rangle\right)  v_{i}\\
&  = -\sum_{j=1}^{k}\left(  \langle S_{\eta},S_{\eta_{j}} \rangle
_{x}+(7-k)\delta_{1j}\right)  \gamma_{\eta_{j}}(x).
\end{align*}
Since $\gamma_{\eta}$ is harmonic if and only if $\Delta\gamma_{\eta}$
is a multiple of $\gamma_{\eta}$ and since $\gamma_{\eta_{1}}\dots
\gamma_{\eta_{k}}$ are linearly independent, we obtain that $\gamma_{\eta}$ is a harmonic map if and only if
\begin{align*}
\Delta\gamma_{\eta}(x)  &  = -\left(  \Vert S_{\eta} \Vert
^{2}+(7-k)\right)  \gamma_{\eta}(x),
\end{align*}
and the last equality holds if and only if $\forma{ S_\eta,S_{\eta_{j}}}=0$ for $j\neq 1$, that is, $\forma{\mathcal{B}^{\ast}\mathcal{B}(\eta),\eta_j}=0.$  The last equality is equivalent to $\eta$ be an eigenvalue of $\mathcal{B}^{\ast}\mathcal{B}$ with eigenvalue $$\forma{\mathcal{B}^{\ast}\mathcal{B}(\eta),\eta}=\forma{\mathcal{B}(\eta),\mathcal{B}(\eta)}=\norma{S_\eta}^2.$$
\end{proof}

\begin{proof}
[Proof of the Theorem \ref{isoparametric}] Consider $x\in M$ and the linear operator $\mathcal{B}^{\ast}\mathcal{B}$ at $x.$ That is, $\mathcal{B}^{\ast}\mathcal{B}(x): T_x^\perp M \to T_x^\perp M,$ where $\mathcal{B}(x)(\eta)=S_{\eta(x)}.$  Since   $\mathcal{B}^{\ast}\mathcal{B}(x)$  is non-negative and self-adjoint there is an orthonormal basis  $\{\nu_1,\ \cdots,\ \nu_k \}\in T_xM^\perp$ of eigenvectors of  $\mathcal{B}^{\ast}\mathcal{B}(x)$  with eigenvalues $0\leq \sigma_1\leq\cdots\leq \sigma_k.$

On the other hand, it is well known that submanifold of the sphere $\mathbb{S}^n$ is isoparametric in $\mathbb{S}^n$ if and only it is isoparametric in $\mathbb{R}^{n+1}.$ From  \cite{T} there are parallel orthonomal  unit normal sections $\{\tau_1,\ \cdots,\ \tau_{k+1} \}
$ of $M$ in $\mathbb{R}^{n+1}$. Define, for $1\leq j\leq k,$ $$\eta_j(y)=\sum_{i=1}^{k+1}a_{ji}\tau_i(y), \  y\in M,$$ if  $$\nu_j=\sum_{i=1}^{k+1}a_{ji}\tau_i(x).$$ The vector fiels $\eta_j$ are orthogonal to $M$ in $\mathbb{S}^n$ since the vector field $V(y)=y,$ $y\in M,$ is parallel in the normal connection of $M$ in $\mathbb{R}^{n+1}$ and $$\forma{\eta_j(x),V(x)}=\forma{\nu_j,V(x)}=0.$$ Moreover, $\eta_j$ is  parallel since it is a linear combination, with constant coefficients, of parallel vector fields,  $1\leq j\leq k.$  By the definition of isoparametric submanifolds it follows that the eigenvalues of each $S_{\eta_j}$ are constant so that   $\eta_j $ is an eigenvector of $\mathcal{B}^{\ast}\mathcal{B}$ with the constant eigenvalue  $\sigma_j=\norma{S_{\eta_j}}^2,$ $1\leq j\leq k.$ 
\end{proof}

\begin{proof}
[Proof of the Theorem \ref{image}]%
Let $\eta$ be an unit normal section parallel in the normal connection of $M$. Assume that the image of
$\gamma_{\eta}$ is contained in an open hemisphere of $\mathbb{S}^{6}$ centered
at a vector $v$. Then $\langle\gamma_{\eta}(x), v \rangle>0$ for all $x\in M.$
Since $M$ is compact there is a neighbourhood $U$ of $v$ in $\mathbb{S}^{6}$
such that $\langle\gamma_{\eta}(x), w \rangle>0$ for all $x\in M$ and for all
$w\in U.$ Clearly, in $U$ we may choose $7$ linearly independent vectors
$w_{1},\dots, w_{7}.$ From the equality (i) of Theorem \ref{mink}
it follows that each function $f_{i}=\langle\gamma_{\eta},w_{i}
\rangle$ is superharmonic, $1\leq i\leq7.$ Since $M$ is compact $f_{i}$ is
constant and then, $\Delta f_{i}=0$. Since the coefficient of $\gamma_{\eta}$
in (i) is nonzero we obtain also from (i) that $f_{i}$ is identically zero.
We then conclude that at each point $x\in M$ the nonzero vector $\gamma_{\eta
}(x)$ is orthogonal to $7$ linearly independent vectors in a $7-$dimensional
vector space, contradiction! This proves the theorem.
\end{proof}

\end{document}